\newtheorem{theorem}{Theorem}[section]
\newtheorem{definition}[theorem]{Definition}
\newtheorem{lemma}[theorem]{Lemma}
\newtheorem{cor}[theorem]{Corollary}
\newcommand{\Hh}{{\mathcal H}}
\newcommand{\Ll}{{\mathcal L}}
\newcommand{\Pp}{{\mathcal P}}
\newcommand{\id}{{\mathrm{id}}}
\newcommand{\N}{\mathbb{N}}
\newcommand{\Z}{\mathbb{Z}}
\newcommand{\Xmax}{X_{max}}
\newcommand{\pmax}{\pi}
\newcommand{\pms}{\pmax_*}
\newcommand{\fb}[1]{\pmax^{-1}(#1)}
\newcommand{\supp}{\mbox{\rm supp}}
\newcommand{\ev}{ev_{0}}
\newcommand{\card}{\mathrm{card}}
\title{On non-tameness of the Ellis semigroup}
\author{Johannes Kellendonk}
\address{Universit\'e Claude Bernard Lyon 1, CNRS UMR 5208, Institut Camille Jordan, F-69622 Villeurbanne, France}
\email{kellendonk@math.univ-lyon1.fr}
\date{\today}
\begin{document}

\maketitle

\begin{abstract}
The Ellis semigroup of a dynamical system $(X,T)$ is tame 
if every element is the limit of a sequence (as opposed to a net) of homeomorphisms coming from the $T$ action. This topological property is related to the cardinality of the semigroup. Non-tame Ellis semigroups have a cardinality which is that of the power set of the continuum $2^{\mathfrak c}$.
The semigroup admits a minimal bilateral ideal and this ideal is a union of isomorphic copies of a group $\mathcal H$, the so-called structure group of $(X,T)$.  
For almost automorphic systems the cardinality of $\mathcal H$ is at most $\mathfrak c$, that of the continuum. We show a partial converse for minimal $(X,T)$ with abelian $T$, namely  that the cardinality of the structure group is $2^{\mathfrak c}$ if the proximal relation is not transitive and the subgroup generated by differences of singular points in the maximal equicontinuous factor is not open.
This refines the above statement about non-tame Ellis semigroups, as it locates a particular algebraic component of the latter which has such a large cardinality. 
\end{abstract}

%%%%%%%%%%%%%
\section{Introduction}

Let $F(X)$ be the set of functions from $X$ to $X$. Equipped with composition as multiplication and the topology of point-wise convergence $F(X)$ is a compact right topological semigroup. If $(X,T)$ is a topological dynamical system, by which we mean a compact space $X$ with an action of a group $T$ by homeomorphisms $\alpha^t$, $t\in T$, its Ellis or enveloping semigroup $E(X,T)$ is the closure of $\{\alpha^t:t\in T\}$  in $F(X)$. If no confusion arises we simply write $E$ or $E(X)$ for $E(X,T)$.
$E(X)$ is closed under composition 
and so a compact right topological sub-semigroup of $F(X)$. 

The Ellis semigroup has rich algebraic and topological properties, and these can be used to characterise the dynamical system. 
%For example, it provides a very elegant characterisation of the proximal relation: two points $x,y\in X$ are called proximal if 
%$\inf_{t\in T} d(\alpha^t(x),\alpha^t(y)) = 0$ and this is the case if and only if $f(x)=f(y)$ for some $f\in E(X,T)$. 
A recent survey on this can be found in \cite{glasner2007enveloping}.
One property which has attracted a lot of attention is tameness \cite{Kohler1995}. 
$E(X,T)$ (and $(X,T)$) is tame if all its elements are Baire class 1 functions, that is, can be obtained as a limit of a sequence of continuous functions \cite{glasnerCM}. An equivalent characterisation is that $E(X,T)$ is tame if its cardinality is at most $\mathfrak c$, that of the continuum  \cite{GlasnerMegrelishvili2006}. A third characterisation is that $E(X,T)$ is not tame if $X$ contains an independence sequence \cite{KerrLi2007}. 
 
Like any semigroup, $E(X,T)$ has an ideal structure and can be decomposed into the equivalence classes of the Green's relations. In the context of non-tame dynamical systems, it is interesting to know which of these parts are especially big. More specifically, we can look at the kernel $\ker E$ of $E(X,T)$ (its smallest bilateral ideal). Given any minimal idempotent $e\in E$ the kernel is the bilateral ideal generated by it, $\ker E = EeE$.  
It is a completely simple semigroup without zero and therefore has the following algebraic structure \cite{Howie,Hindman}.
%Here we analyse another part of the smallest bilateral ideal (the kernel) of $E(X,T)$. By the Rees structure theorem of completely simple semigroups the kernel $\ker E$ of $E(X,T)$ is 
%algebraically isomorphic  to 
Let $\Lambda$ be the set of minimal left ideals and $I$ the set of minimal right ideals of $E$, then
$$\ker E \cong I\times \Hh\times \Lambda$$
with multiplication
$$(i,g,\lambda) (j,h,\mu) := (i,g a_{\lambda \, j} h,\mu)$$
where $\Hh$ is the so-called {\em structure group} and $(a_{\lambda\, i})_{i\in I,\lambda\in\Lambda}$ is a matrix with values in $\Hh$. 
Given a minimal idempotent $e\in E$, $\Hh$ can be taken to be  
 $$ \Hh_e:= eEe$$
different choices of minimal idempotents leading to isomorphic groups. 

What can we say about the size of $I$, $\Hh$, and $\Lambda$? 
Let us mention some known results for minimal $(X,T)$.
\begin{enumerate}
\item $(X,T)$ is distal if and only if $E$ is a group \cite{Auslander,Hindman}.
 This implies that $E = \ker E = \Hh$ and $\card(I)=\card(\Lambda) = 1$.
\item If $(X,T)$ is equicontinuous with $T$ abelian so that we can equip $X$ with a group structure then 
$E = \ker E = \Hh=X$ \cite{Auslander,Hindman}, hence $\card(\Hh) \leq \mathfrak{c}$.
\item If $(X,T)$ is almost automorphic with maximal equicontinuous factor $\Xmax$ then
$\ker E \cong I\times E(\Xmax)$ with multiplication $(i,g)(j,h) = (i,gh)$ \cite{ABKL}, we recall the proof below. Thus we have $\card(\Lambda) = 1$ and $\Hh = E(\Xmax)$, and at least 
if $T$ abelian $\card(\Hh) \leq \mathfrak{c}$.
\item Dynamical systems arising from primitive aperiodic bijective substitutions provide examples of non-tame Ellis semigroups for which $E = \Z\cup \ker E$ with finite $I$ and $\Lambda$ but $\card(\Hh) = 2^{\mathfrak{c}}$ 
\cite{KY}.
%\item If $(X,T)$ is tame and $T$ admits an invariant measure on $X$ then it is almost automorphic.
\item If $X$ is metrisable and $\card{\Lambda}\leq \mathfrak{c}$ then $(X,T)$ is a PI-flow \cite{GG20}.
%(Equivalently, if $(X, T)$ is a minimal metrisable system which is not a PI-flow then $E(X, G)$ contains  $2^\mathfrak{c}$ minimal left ideals, that is, $\card(\Lambda) = 2^\mathfrak{c}$.)
\end{enumerate}
To illuminate the last result we recall that  
$(X,T)$ is a PI-flow if it admits a proximal extension $(\tilde X,T)$ which itself is a tower of PI-extensions of the one-point system $(pt,T)$, and a PI-extension is an extension which is a composition of a proximal with an isometric extension \cite{GG20}. As is also shown in \cite{GG20}, the converse is not always the case, there exists a PI minimal metrisable system $(X,T)$ even with $T=\Z$ for which 
$E(X, T)$ contains nevertheless $2^\mathfrak{c}$ minimal left ideals.

In this work we focus on the cardinality the structure group $\Hh$. Starting point is the observation that this group need not to have the same cardinality as the Ellis semigroup. Indeed, there are almost automorphic non-tame dynamical systems with $T=\Z$ and hence their structure group coincides with their maximal equicontinuous factor and so has a cardinality $\leq \mathfrak c$. To state our result we need to provide more background.

Recall that $(X,T)$ is equicontinuous if the family of homeomorphisms $\{\alpha^t:t\in T\}$ is equicontinuous. If $(X,T)$ is moreover minimal and $T$ abelian then $X$ carries a group structure (which we denote additively) and a unique $T$-invariant probability measure, namely the Haar measure.
Any dynamical system $(X,T)$, equicontinuous or not, admits a maximal equicontinuous factor $\pi:(X,T)\to (X_{max},T)$. 
The equivalence relation $x\sim y$ iff $\pi(x)=\pi(y)$ is called the equicontinuous structure relation. 
 A point $\xi\in X_{max}$ is called singular, if the fibre $\pi^{-1}(\xi)$ contains two proximal points.  
Otherwise it is called regular. A system is called almost automorphic if it is minimal and there is $\xi\in X_{max}$ such that $\pi^{-1}(\xi)$ contains a single point. %For minimal systems this implies that the regular points are exactly the points which have a unique pre-image under $\pi$.

A recent result states that if a minimal system with abelian $T$ is tame then it is almost automorphic and the set of regular points 
%of its maximal equicontinuous 
%factor which have a unique pre-image under the factor map 
has full Haar measure in $X_{max}$ \cite{fuhrmann2018irregular,glasner2018structure}. 
The converse need not to be true. Toeplitz systems (minimal almost automorphic extensions of odometers) need not be tame, although their set of regular points has full measure. For instance, of the two symbolic dynamical systems associated to the two substitutions
$$\begin{matrix}
a & \mapsto & a a b a a\\
b & \mapsto & a b b a a
\end{matrix} \quad \qquad
\begin{matrix}
a & \mapsto & a a b a a\\
b & \mapsto & a b a b a
\end{matrix}
$$
the one on the left is tame, whereas the other is not tame \cite{FKY}. 
Note that these substitutions differ only in the order of two letters, in particular their associated dynamical systems are strong orbit equivalent.

The following result was stated in \cite{ABKL} for abelian $T$. The proof given there extends verbatim to non-abelian $T$.
\begin{theorem}
Let $(X,T)$ be a minimal system. If the proximal relation agrees with the equicontinuous structure relation then the structure group is isomorphic to the Ellis semigroup of the maximal equicontinuous factor of $\Xmax$. The isomorphism is also a homeomorphism.
\end{theorem}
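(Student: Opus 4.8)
The plan is to build the isomorphism from the maximal equicontinuous factor map $\pi\colon X\to\Xmax$. By functoriality of the enveloping semigroup, $\pi$ induces a continuous surjective semigroup homomorphism $\pi_*\colon E(X)\to E(\Xmax)$ with $\pi\circ p=\pi_*(p)\circ\pi$ for all $p$. Fix a minimal idempotent $u\in E(X)$, so $\Hh\cong uE(X)u=:\Hh_u$. Since $\Xmax$ is minimal and equicontinuous, hence distal, $E(\Xmax)$ is a group by item~(1) above; in particular $\id_{\Xmax}$ is its only idempotent, so $\pi_*(u)=\id_{\Xmax}$ and $\pi_*$ restricts to a group homomorphism $\phi:=\pi_*|_{\Hh_u}\colon\Hh_u\to E(\Xmax)$. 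This $\phi$ is surjective: given $q\in E(\Xmax)$ choose $p$ with $\pi_*(p)=q$; then $upu\in\Hh_u$ and $\phi(upu)=\pi_*(u)\,q\,\pi_*(u)=q$. Everything then reduces to showing $\phi$ is a homeomorphism onto $E(\Xmax)$; this is where the hypothesis is used, and I would do injectivity and continuity of the inverse together, by writing the inverse down.

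Let $X_u:=u(X)=\{x\in X:ux=x\}$, a compact subset of $X$. The restriction $\pi|_{X_u}\colon X_u\to\Xmax$ is onto since $\pi\circ u=\pi_*(u)\circ\pi=\pi$. It is also injective, and here is the one place the hypothesis is used: if $y,z\in X_u$ satisfy $\pi(y)=\pi(z)$, then $(y,z)$ is proximal by the assumed equality of the proximal and equicontinuous structure relations, and two proximal points fixed by a common minimal idempotent must coincide (a structural fact, proved below). Hence $\pi|_{X_u}$ is a homeomorphism; let $\psi\colon\Xmax\to X_u$ be its inverse. For $p\in\Hh_u$ one has $pu=p=up$, so $p$ is determined by $p|_{X_u}$ and $p(X_u)\subseteq X_u$; and for arbitrary $x\in X$, putting $q=\pi_*(p)$, the point $p(x)=p(ux)$ lies in $X_u$ and satisfies $\pi(p(x))=\pi_*(p)(\pi(x))=q(\pi(x))$, so $p(x)=\psi\big(q(\pi(x))\big)$. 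Thus $p=\psi\circ\pi_*(p)\circ\pi$; in particular $\phi$ is injective, and the continuous map $\Psi\colon E(\Xmax)\to F(X)$, $q\mapsto\psi\circ q\circ\pi$, has image in $\Hh_u$ (as $q=\pi_*(p)$ for some $p\in\Hh_u$, whence $\Psi(q)=p$) and is a two-sided inverse of $\phi$. Since $\phi$ and $\Psi$ are both continuous, $\phi$ is an isomorphism of topological groups, which is what we want.

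It remains to prove the structural fact. Suppose $u$ is a minimal idempotent, $uy=y$, $uz=z$ and $(y,z)$ is proximal. Then $L:=E(X)u$ is the minimal left ideal containing $u$. Pick $c\in E(X)$ with $cy=cz$ and set $a:=cu\in L$; then $ay=c(uy)=cy=cz=c(uz)=az$. Since $La=L$ by minimality and $u\in L$, there is $b\in L$ with $ba=u$, and therefore $y=uy=b(ay)=b(az)=uz=z$.

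The main obstacle is the injectivity of $\phi$, whose engine is this structural fact; note that the fact itself is purely about the ideal structure of $E(X)$ and does not use the hypothesis, which enters only to convert equality of $\Xmax$-images into proximality — so one should check that it is the nontrivial inclusion of the equicontinuous structure relation into the proximal relation that is invoked (the reverse being automatic from distality of $\Xmax$). The point in the write-up needing care is the homeomorphism claim: passing through the explicit inverse $q\mapsto\psi\circ q\circ\pi$ sidesteps the need to know beforehand that $\Hh_u$ is compact in the subspace topology of $E(X)$, which is not obvious and is really a byproduct of the theorem. The rest — functoriality and continuity of $\pi_*$, the identity $\pi_*(u)=\id_{\Xmax}$, surjectivity of $\phi$ — is routine.
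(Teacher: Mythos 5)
The algebraic half of your argument is correct and runs close to the paper's own proof: the paper likewise reduces the statement to showing that the kernel of $\left.\pmax_*\right|_{\Hh_e}$ is trivial, which it does via uniqueness of the minimal left ideal (from $x\sim_p f(x)$ it gets an idempotent $p$ in the unique minimal left ideal with $p(x)=pf(x)$, and $ep=e$ then forces $f(x)=e(x)$), whereas you use the equivalent structural fact that two proximal points fixed by a common minimal idempotent coincide. Both routes are valid, and your set-theoretic identity $p=\psi\circ\pmax_*(p)\circ\pmax$ is a clean way to package injectivity and surjectivity at once.

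The gap is in the topological half. You assert that $X_u:=u(X)=\{x\in X: ux=x\}$ is compact, but $u$ is only a pointwise limit of homeomorphisms and is in general \emph{not} continuous, so $u(X)$ need not be compact and $\{x:ux=x\}$ need not be closed. Consequently $\pmax|_{X_u}$ is a continuous bijection onto $\Xmax$ but not necessarily a homeomorphism, $\psi$ need not be continuous, and the map $\Psi(q)=\psi\circ q\circ\pmax$ — the step on which the entire homeomorphism claim rests — need not be continuous. This is not a repairable defect: for the Sturmian shift (almost automorphic, so the hypothesis holds) the group $\Hh_e=eEe$ carries a Sorgenfrey-circle type topology and is not compact, while $E(\Xmax)$ is the circle, so no homeomorphism between them can exist. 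The paper itself records this immediately after its proof: the isomorphism ``is a continuous bijection, [but] it is not bi-continuous, as $eEe$ is not compact if $E$ contains more than one minimal idempotent.'' So the final sentence of the theorem as stated is at odds with the body of the paper; what can actually be proved — and what your argument, stripped of the compactness claim, does prove — is that $\pms$ is a continuous group isomorphism.
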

We note that for almost automorphic systems the proximal relation always agrees with the equicontinuous structure relation, but, as we already said above, this does not imply that $E$ is tame. 

As partial converse to the above we obtain the following statement.
\begin{theorem}
Let $(X,T)$ be a minimal system with abelian $T$. Let $\Ll^{sing}$ be the group generated by differences $\xi-\xi'$ of singular points $\xi,\xi'$.
%\in \Xmax^{sing}-\Xmax^{sing}$. 
If the proximal relation is not transitive and 
 $\Xmax/\Ll^{sing}$ uncountable then the structure group has cardinality $2^\mathfrak{c}$.
\end{theorem}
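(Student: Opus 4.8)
The plan is to produce an injection from $\{0,1\}^{S}$ into $\Hh$ for a set $S$ of cardinality $\mathfrak c$; since $\card(\Hh)\le\card(E)\le 2^{\mathfrak c}$ by the results recalled in the introduction, this yields $\card(\Hh)=2^{\mathfrak c}$. Fix a minimal idempotent $e$ and use $\Hh=\Hh_e=eEe$. The factor map $\pi$ induces a surjective semigroup homomorphism $\pi_*\colon E(X)\to E(\Xmax)=\Xmax$; as $\Xmax$ is a group, $\pi_*(e)$ is an idempotent of a group, hence $0$, so $\pi_*$ restricts to a surjective group homomorphism $\Hh_e\to\Xmax$ with kernel $\Hh_e^{0}:=\Hh_e\cap\pi_*^{-1}(0)$. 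I will use two standard facts. First, for any minimal idempotent $u$ the point $ux$ is proximal to $x$; applied to $e$ (which satisfies $\pi_*(e)=0$) this shows that $e$ fixes pointwise every fibre $\pi^{-1}(\xi)$ over a regular point $\xi$. Second, $X$ being minimal, $E$ acts transitively on $X$ (the set $Ex$ is closed and contains $\overline{Tx}=X$), and consequently $\pi_*^{-1}(0)$ acts transitively on each fibre $\pi^{-1}(\xi)$.

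Next I would extract the fibre-level consequences of the two hypotheses. Since the proximal relation is not transitive, $(X,T)$ is not almost automorphic: in an almost automorphic system the proximal relation coincides with the equicontinuous structure relation, which is an equivalence relation. Hence no fibre of $\pi$ is a singleton, so every regular fibre contains at least two points, and those are pairwise non-proximal. On the other hand $\Ll^{sing}$ is generated by differences of singular points, so all singular points lie in one coset of $\Ll^{sing}$; as $\Xmax$ is a compact group and $\Xmax/\Ll^{sing}$ is uncountable, there are at least $\mathfrak c$ distinct cosets, all but one consisting entirely of regular points. Choose one point $\xi_s$ in each of $\mathfrak c$ such all-regular cosets, giving a set $S=\{\xi_s\}$ of regular points in pairwise distinct $\Ll^{sing}$-cosets, and for each $s$ fix two distinct points $x_s^{0},x_s^{1}\in\pi^{-1}(\xi_s)$.

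The heart of the matter is the following \emph{independence claim}: for every $\phi\in\{0,1\}^{S}$ there exists $g_\phi\in\pi_*^{-1}(0)$ with $g_\phi(x_s^{0})=x_s^{\phi(s)}$ for all $s$. Granting it, $eg_\phi e\in\Hh_e$, and since each $x_s^{i}$ lies in a regular fibre it is fixed by $e$, so $eg_\phi e(x_s^{0})=x_s^{\phi(s)}$; thus $\phi\mapsto eg_\phi e$ is injective and $\card(\Hh)\ge 2^{\mathfrak c}$, which together with the upper bound finishes the proof. To construct $g_\phi$, by compactness of $E$ it is enough to realise every finite subpattern approximately: for each finite $S_{0}\subseteq S$, all target neighbourhoods $U_{s}\ni x_s^{\phi(s)}$ ($s\in S_{0}$), and each neighbourhood $V$ of $0$ in $\Xmax$, there is $t\in T$ acting on $\Xmax$ by a translation lying in $V$ and with $\alpha^{t}(x_s^{0})\in U_{s}$ for all $s\in S_{0}$. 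Equivalently, the $T$-orbit closure in $X^{S_{0}}$ of the tuple $(x_s^{0})_{s\in S_{0}}$ should contain $(x_s^{\phi(s)})_{s\in S_{0}}$, reached with return times whose images in $\Xmax$ are arbitrarily close to $0$.

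I expect this last step to be the main obstacle, and it is precisely where both hypotheses are used. That one fibre $\pi^{-1}(\xi_s)$ is "wobbly", i.e. that arbitrarily small times in $\Xmax$ move $x_s^{0}$ to $x_s^{1}$, is just transitivity of $\pi_*^{-1}(0)$ on that fibre; but for the \emph{joint} statement one must exclude correlations between distinct fibres. In a distal system those correlations are total — the action of $\pi_*^{-1}(0)$ on a product of fibres is a single group orbit rather than the full product — which is why non-almost-automorphicity alone is not enough, and it is the non-transitivity of the proximal relation that makes the fibres genuinely independently permutable. The subgroup $\Ll^{sing}$ measures the residual rigidity: I would show that any obstruction to the independence of $\pi^{-1}(\xi_s)$ and $\pi^{-1}(\xi_{s'})$ arises from a chain of singular fibres joining $\xi_s$ to $\xi_{s'}$ and is therefore controlled by $\Ll^{sing}$, so that choosing the $\xi_s$ in pairwise distinct cosets makes all finite subpatterns simultaneously realisable. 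Making this precise — via an analysis of the $\pi_*^{-1}(0)$-action on fibre products, of how proximal pairs propagate between fibres, and a compactness argument diagonalising over finite subpatterns — is the crux.
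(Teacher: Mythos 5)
Your reduction to the ``independence claim'' is where the proof lives, and that claim is neither proved nor, I believe, provable by the mechanism you sketch. You propose to permute points independently inside $\mathfrak c$ many \emph{regular} fibres, and to argue that any correlation between the $\pi_*^{-1}(0)$-actions on distinct regular fibres must propagate through a chain of \emph{singular} fibres and is therefore controlled by $\Ll^{sing}$. That heuristic is not sound: correlations between regular fibres typically come from the isometric (distal) part of the extension $X\to\Xmax$, not from proximality. Already in a distal isometric extension (e.g.\ a skew product over a rotation) the action of $\pi_*^{-1}(0)$ on a product of two fibres is a single group orbit rather than the full product, and there are no singular fibres at all to blame. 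Grafting a non-transitive proximal structure onto such a system keeps the hypotheses of the theorem while leaving the regular fibres just as rigidly correlated, so your $g_\phi$ need not exist. Nothing in the hypotheses forces regular fibres to be independently permutable; what the hypotheses do give you lives on the singular fibres.

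That is exactly how the paper argues, and the contrast is instructive. Non-transitivity of the proximal relation is equivalent (Lemma \ref{lem-trans}) to the non-triviality of the little structure group $\Gamma_e$, i.e.\ it produces a single element $f=e\,q_1\cdots q_k\,e\neq e$ (a product of minimal idempotents) whose support --- the set of $\xi$ where $f$ moves some point of $e\fb{\xi}$ --- is a non-empty subset of $\Xmax^{sing}$. One then conjugates $f$ by a $T$-equivariant section $s:\Xmax\to\Hh_e$ of $\ev\circ\pms$ to translate its support: $\supp(s(a)fs(a)^{-1})=\supp(f)+a$. The hypothesis that $\Xmax/\Ll^{sing}$ (hence $\Xmax/\Ll_f$) is uncountable is used precisely to pick an uncountable set $A$ of coset representatives for which these translated supports are pairwise disjoint; disjointness of supports is what replaces your independence claim, since it makes the net of finite products $\prod_{a\in F}f_a$ converge for every subset $B\subset A$ to a well-defined element $f_B\in\Hh_e$, with distinct $B$ giving distinct $f_B$. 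So the $2^{\mathfrak c}$ elements are built from one proximality-generated germ translated around the group, not from free choices inside regular fibres. Your upper bound $\card(\Hh)\le 2^{\mathfrak c}$, the observation that $e$ fixes regular fibres pointwise, the single-fibre transitivity of $\pi_*^{-1}(0)$, and the compactness/diagonalisation scheme are all fine; the missing idea is to locate the independence where the hypotheses actually provide it, namely on disjoint translates of a subset of the singular set.
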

The condition that $\Xmax/\Ll^{sing}$ is uncountable is equivalent to $\Ll^{sing}$ being open.\footnote{We thank Todor Tsankov for explaining us this equivalence.} 
Note that this condition is always satisfied if there are only countably many singular points in $X_{max}$ while $X_{max}$ is uncountable. 
We prove this theorem using a method already employed in \cite{KY}.

%%%%%%%%%%%%%%%%%%%%%%
\section{Preliminaries}
We recall here some concepts and results for minimal topological dynamical systems $(X,T)$ with compact metrisable space $X$. 
%Some of them have been introduced and studied for abelian group actions, but are valid also for non-abelian group actions. 

Two points $x,y\in X$ are proximal, written $x\sim_p y$, if $\inf_{t\in T} d(\alpha^t(x),\alpha^t(y)) = 0$ where $d$ is a metric which induces the topology and $\alpha$ the $T$-action. This notion does not depend on the choice of metric. A point is distal if it is not proximal with any other point. $(X,T)$ is point distal if it is minimal and contains a distal point.

We denote by $(\Xmax,T)$ the maximal equicontinuous factor of $(X,T)$ (which is uniquely determined up to isomorphism) and its factor map by $\pmax$. The equicontinuous structure relation on $X$ is the relation induced by $\pmax$: $x\sim y$ iff $\pmax(x)=\pmax(y)$. It is an equivalence relation which always contains the proximal relation.

We say that $(X,T)$ has a {\em finite distal fibre} if there is $\xi\in \Xmax$ such that $\pmax^{-1}(\xi)$ is finite and its points are pairwise non-proximal. 

We define the {\em coincidence rank} of the fibre $\pmax^{-1}(\xi)$ to be 
$$cr(\xi):=\sup\{l\in\N:\exists x_1,\dots,x_l\in\pmax^{-1}(\xi),x_i\not\sim_p x_l\}$$
By Lemma~2.10 \cite{BK} $cr(\xi)$ is the same for all $\xi$ if $(X,T)$ is minimal\footnote{While \cite{BK} makes the standing assumption that $T$ is abelian, this is not required for the proof of Lemma~2.10}
We therefore may call $cr=cr(\xi)$ the coincidence rank of the minimal system $(X,T)$. 
\begin{lemma} Let $(X,T)$ be minimal with metrisable $X$.
\begin{enumerate}
\item
$(X,T)$ has a finite distal fibre if and only if it is point distal and has finite coincidence rank. 
\item $cr=1$ if and only if the proximal relation agrees with the equicontinuous structure relation. In particular, in this case the proximal relation is transitive.
\item If $cr$ finite and $T$ contains a compact set $K$ such that any open set containing $K$ generates $T$, then transitivity of the proximal relation implies that $cr=1$. 
\item
If $(X,T)$ is almost automorphic then $cr=1$.
\item
If $(X,T)$ is point distal and $cr=1$ then it is almost automorphic.
\end{enumerate}
\end{lemma}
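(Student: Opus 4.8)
The plan is to settle parts (2), (4), (5) and one direction of (1) directly, and then to focus on the converse of (1) and on (3).

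Part (2) is an unravelling of definitions. By the constancy of $cr$ (Lemma~2.10 of \cite{BK}), $cr=1$ says precisely that every fibre $\pi^{-1}(\xi)$ contains no two non-proximal points, i.e.\ $\pi(x)=\pi(y)$ implies $x\sim_p y$; since the proximal relation is always contained in the equicontinuous structure relation, this is exactly the equality of the two relations, and transitivity of the proximal relation then follows because the equicontinuous structure relation is an equivalence relation. For (4): a one-point fibre has $cr(\xi)=1$, hence $cr=1$ by constancy. For (5): if $x_0$ is a distal point and $\xi_0=\pi(x_0)$, then $cr=1$ forces every point of $\pi^{-1}(\xi_0)$ to be proximal to $x_0$, so distality of $x_0$ gives $\pi^{-1}(\xi_0)=\{x_0\}$ and the system is almost automorphic. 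The forward implication of (1) is equally quick: if $\pi^{-1}(\xi)$ is finite with pairwise non-proximal points then $cr(\xi)=\card\,\pi^{-1}(\xi)<\infty$, and every point of it is distal, being non-proximal to the other points of the fibre by hypothesis and to every point off the fibre because proximal points share a $\pi$-image; hence $(X,T)$ is point distal with finite $cr$.

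For the converse of (1), write $cr=n$. The set $S:=\{\xi\in\Xmax:\card\,\pi^{-1}(\xi)\le n\}$ is a $G_\delta$ --- a fibre has at most $n$ points iff for every $\varepsilon>0$ it is covered by $n$ balls of radius $\varepsilon$, an open condition on $\xi$ --- and it is $T$-invariant, hence, in the minimal $\Xmax$, either empty or dense, and in the latter case residual. Moreover any $\xi\in S$ has a fibre of exactly $n$ points (it has at least $n$ since $cr(\xi)=n$), which is therefore an antichain and a finite distal fibre. So it suffices to show $S\neq\emptyset$, and this is where point distality should be used, e.g.\ via Veech's theorem (the distal points are residual in $X$) together with semi-openness of the factor map $\pi$, or via the Ellis semigroup (a distal point is fixed by every idempotent of $E$, so its fibre is a controlled orbit). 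Pinning down that some fibre has cardinality exactly $n$ --- rather than an arbitrarily large proximal class sitting on top of an $n$-element antichain --- is the delicate point of this direction.

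I expect part (3) to be the main obstacle, and the crux to be the use of the hypothesis on $T$. Transitivity of the proximal relation makes it an equivalence relation, so each fibre $\pi^{-1}(\xi)$ splits into at most $cr$ proximal classes, and $cr=1$ is exactly the assertion that every fibre is a single class. Assuming it is not, the plan is to prove that the number $m(\xi)$ of proximal classes of $\pi^{-1}(\xi)$ is independent of $\xi$: it is plainly constant along $T$-orbits, and to promote this to genuine constancy I would match the proximal classes of nearby fibres continuously over small open subsets of $\Xmax$ and show there is no nontrivial monodromy --- this is where the hypothesis that every open neighbourhood of the compact set $K$ already generates $T$ enters, forcing a matching carried along such a neighbourhood to close up consistently, minimality of $\Xmax$ finishing the argument. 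Once $m\equiv m_0$, collapsing proximal classes produces an $m_0$-to-one factor $X/\!\sim_p$ interposed between $X$ and $\Xmax$, with $X/\!\sim_p\to\Xmax$ a finite-to-one extension of an equicontinuous system on which the proximal relation has become trivial, and a short argument should then force $m_0=1$, i.e.\ $cr=1$. The genuinely non-formal steps are the continuous local matching of proximal classes and the way the compact-generation hypothesis on $T$ kills the monodromy; I would expect the remainder to be bookkeeping.
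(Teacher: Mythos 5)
Your treatments of (2), (4), the forward half of (1), and (5) are correct; in fact for (5) you argue directly from the distal point (its fibre must be a single proximal class, hence a singleton), which is slightly more economical than the paper's route through part (1). But the two hard items --- the converse of (1) and part (3) --- are left as plans rather than proofs, and in both cases the missing step is exactly the mathematical content.

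For the converse of (1): your residual $G_\delta$ set $S=\{\xi:\card\,\pi^{-1}(\xi)\le n\}$ is a correct framework, but everything hinges on $S\neq\emptyset$, which you explicitly defer. The paper closes this with Veech's theorem in the form: for a point distal minimal system there exists $\xi$ such that $\pi^{-1}(\xi)$ contains a \emph{dense} set of distal points (this is a stronger statement than the residuality of distal points in $X$, which is Ellis's result and which you attribute to Veech). Since distinct distal points in one fibre are pairwise non-proximal, finiteness of $cr(\xi)$ caps that dense set at $cr$ elements, so the fibre is the closure of a finite set, hence finite, and is then automatically a finite distal fibre. Without this (or an equivalent) input your argument does not rule out the scenario you yourself flag, namely every fibre carrying an infinite proximal class on top of an $n$-element antichain.

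For part (3) the gap is more serious, because your sketch stops precisely where the hypothesis on $T$ and the maximality of $\Xmax$ must do their work. The paper's argument is: transitivity of the proximal relation $P$ makes $P$ a closed (invariant) equivalence relation (Auslander), so $X/P$ is a compact factor of $X$ lying over $\Xmax$; finiteness of $cr$ gives a uniform $\delta>0$ separating non-proximal points within a fibre, so $X/P\to\Xmax$ is a finite, equicontinuous (isometric) extension; the Sacker--Sell theorem --- this is where the condition that $T$ contains a compact $K$ any open neighbourhood of which generates $T$ is used --- then says that such an extension of a minimal equicontinuous system is itself equicontinuous; and maximality of $\Xmax$ forces $X/P=\Xmax$, i.e.\ $cr=1$. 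Your proposal replaces this with a ``no monodromy'' argument for the constancy of the number of proximal classes per fibre (which is not the crux) and ``a short argument should then force $m_0=1$'' (which is the crux, and for which nothing in your sketch invokes the maximality of $\Xmax$ --- the only mechanism by which $m_0=1$ can be concluded). As written, part (3) is not proved.
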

\begin{proof}
1. 
The implication $"\Rightarrow"$ is direct. For the other direction $"\Leftarrow"$ suppose that $(X,T)$ is point distal. By a result of Ellis the distal points are then residual \cite{Ellis}. By a result of Veech there is $\xi$ such that $\pmax^{-1}(\xi)$ contains a dense set of distal points \cite{Veech}. As $cr(\xi)$ is finite, $\pmax^{-1}(\xi)$ contains finitely many distal points. The closure of a finite set being finite we conclude that   $\pmax^{-1}(\xi)$ is a finite distal fibre.

2. If $cr=1$ then all points of a fibre belong to the same proximal class. As the proximal relation is contained in the equicontinuous structure relation if follows that they must coincide.

3. Suppose that the proximal relation $P$ is transitive. This implies that $P$ is topologically closed \cite{Auslander} and hence $X/P$ is compact in the induced metric. As $cr$ is finite by hypothesis there exists $\delta_0$ such that distal points of $X$ which. belong to the same fibre $\pmax^{-1}(\xi)$ have distance at least $\delta$. Hence $(X/P,T)$ is an equicontinuous extension of $(\Xmax,T)$. As shown in \cite{SackerSell}, the condition on $T$ implies that $(X/P,T)$ is an equicontinuous system. Hence $X/P=\Xmax$, hence $cr=1$.

4. is direct, as we can measure cr at the point $\xi$ whose fibre is a singleton.

5. This follows from 1., because point distal and $cr=1$ imply that there is a finite distal fibre, say at $\xi$, and since $cr(\xi)=1$ this fibre has a single point.
\end{proof}

\section{The structure of the kernel of $E$}
If a semigroup admits a smallest bilateral ideal this ideal is called the kernel of the semigroup.  
Compact right-topological semigroups admit always a kernel. This kernel, which we denote $\ker E$, is a completely simple semigroup (without zero element) whose structure we now partly describe. See \cite{Hindman} or \cite{Howie} for details.
%can be described as follows. 

Let $J_{min}$ be the set of idempotents in $\ker E$. These are called minimal idempotents.
%; they are also minimal among all idempotents of $E$ w.r.t.\ the usual order on idempotents, $p\leq q$ if $p=pq=qp$. 
$\ker E$ is the bilateral ideal generated by $p$ for any choice of $p\in J_{min}$. $\ker E$ is partitioned by its left ideals and two idempotents $p,q$ of the same left ideal satisfy $pq=p$.  $\ker E$ is also partitioned by its right ideals  and two idempotents $p,q$ of the same right ideal satisfy $pq=q$. The intersection of a left ideal with a right ideal contains a unique idempotent $p$ and is a group, namely $\Hh_p:=p E p$. All these groups for different choices of $p$ are isomorphic. If $p,q$ belong to the same left ideal then the isomorphism $\Hh_p\to\Hh_q$ is given by left multiplication with $p$, and if $p,q$ belong to the same right ideal then the isomorphism $\Hh_p\to\Hh_q$ is given by right multiplication with $q$. In particular, given any two  $p,q\in J_{min}$ there are $p',q'\in J_{min}$ such that $x\mapsto p'xq'$ is a group isomorphism from $\Hh_p$ to $\Hh_q$.
We should mention that the isomorphisms above are not all homeomorphisms and the groups not all homeomorphic, but this will not be an issue for what we do. 

Let $\Gamma_p$ be the group generated by $pJ_{min} p$. As the isomorphism between $\Hh_p$ and $\Hh_q$ is given by multiplication with idempotents, the same isomorphism maps $\Gamma_p$ to $\Gamma_q$. 
\begin{definition}
Let $e$ be a minimal idempotent. The {\em structure group} is the group $\Hh_e=eEe$. The {\em little structure group} $\Gamma_e$ is its subgroup generated by $eJ_{min} e$ 
\end{definition}
We recall that $\Hh_e$ depends on the choice of $e$ only up to isomorphism.  
In the general theory of semigroups $\Hh_e$ is called the Rees-structure group of $\ker E$.

\subsection{When the structure group is small}

Like any factor map, $\pmax:X\to \Xmax$ induces an epimorphism of semigroups $\pmax_*:E(X)\to E(\Xmax)$, namely $\pmax_*(f)(\xi) = \pmax(f(x))$ where $x$ is any element of $\fb{\xi}$. 
Note that idempotents $p\in E$ preserve the fibres of $\pmax$ and hence ${\pmax}_*(p)(\xi) = \pmax(p(x))=\xi$. It follows that ${\pmax}_*:E(X)\to E(\Xmax)$ restricts to a (continuous) epimorphism of groups
$$\left.{\pmax}_*\right|_{\Hh_e} : \Hh_e\to E(\Xmax)$$
whose kernel contains $\Gamma_e$. If no confusion is possible, we write  $\pms$ for $\left.{\pmax}_*\right|_{\Hh_e}$.
\begin{lemma}[\cite{ABKL}]
Let $(X,T)$ be minimal.
If the proximal relation agrees with the equicontinuous structure relation ($cr=1$) then  $\pms: \Hh_e\to E(\Xmax)$ is an isomorphism. If moreover $T$ contains a compact set $K$ such that any open set containing $K$ generates $T$ then the converse is true as well,
 $\pms$ is an isomorphism only if $cr=1$.
\end{lemma}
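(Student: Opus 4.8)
The plan is to analyse the kernel of the continuous group epimorphism $\pms=\left.{\pmax}_*\right|_{\Hh_e}:\Hh_e\to E(\Xmax)$, which as recalled above always contains $\Gamma_e$. The basic observation is that, for $f\in\Hh_e$, one has $\pms(f)=\id$ if and only if $\pmax(f(x))=\pmax(x)$ for every $x\in X$, i.e.\ if and only if $f(x)$ is equicontinuously equivalent to $x$ for all $x$. Thus the forward implication asks us to show that this kernel is trivial when $cr=1$, while the converse (under the hypothesis on $T$) asks us to extract from triviality of the kernel enough rigidity of the proximal relation.

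\emph{Forward direction.} Assume $cr=1$. By part~2 of the Lemma in the preliminaries the equicontinuous structure relation then coincides with the proximal relation $\sim_p$, so every $f\in\ker\pms$ satisfies $f(x)\sim_p x$ for all $x$. The key point is that no two distinct points of $e(X)=\{x\in X:ex=x\}$ are proximal: if $x,y\in e(X)$ and $x\sim_p y$, choose $p\in E$ with $px=py$ and put $h:=epe$; since $e\in\ker E$ and $\ker E$ is a two-sided ideal, $h\in eEe\cap\ker E=\Hh_e$, so $h$ is invertible in the group $\Hh_e$, and from $ex=x$, $ey=y$ and $px=py$ one gets $hx=e(px)=e(py)=hy$, whence $x=y$ upon applying $h^{-1}$ (note $h^{-1}h=e$). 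Granting this, take $f\in\ker\pms$: from $ef=f$ we get $f(X)\subseteq e(X)$, and for $w\in e(X)$ the point $f(w)$ lies in $e(X)$ and is proximal to $w$, so $f(w)=w$; hence $f$ is the identity on $e(X)$, and using $fe=f$ we obtain $f(x)=f(e(x))=e(x)$ for every $x$, i.e.\ $f=e$. Since $\pms$ is already a surjective homomorphism, it is now an isomorphism onto $E(\Xmax)$ (and a homeomorphism, by the same compactness argument as in \cite{ABKL}).

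\emph{Converse.} Assume the hypothesis on $T$ and that $\pms$ is an isomorphism, so $\ker\pms=\{e\}$ and in particular $\Gamma_e=\{e\}$; we argue by contradiction and suppose $cr\neq1$, i.e.\ $cr\geq2$. Then some fibre $\pmax^{-1}(\xi)$ contains points $x_1\not\sim_p x_2$. Since $ea\sim_p eb$ forces $a\sim_p b$ — because $p(ea)=p(eb)$ reads $(pe)a=(pe)b$ — we get $ex_1\not\sim_p ex_2$, so, as $e$ preserves the fibres of $\pmax$, the intersection $e(X)\cap\pmax^{-1}(\xi)$ contains two distinct (indeed non-proximal) points $ex_1,ex_2$. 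By minimality there is $g\in E$ with $g(ex_1)=ex_2$, and then $ege\in\Hh_e$ maps $ex_1$ to $ex_2$; the obstruction to $ege$ lying in $\ker\pms$ is precisely that ${\pmax}_*(g)$ is only known to stabilise $\xi$, not to be the identity on all of $\Xmax$. The main difficulty — and the point where the hypothesis on $T$ is indispensable — is to remove this obstruction, i.e.\ to realise a non-trivial permutation of the fibre by an element of the structure group preserving \emph{all} fibres; this should follow along the lines of part~3 of the preliminary Lemma, whose proof uses the Sacker--Sell type collapsing (equicontinuous extensions of equicontinuous systems are equicontinuous, for such $T$) to control how fibrewise data over $\Xmax$ lifts to $E(X)$. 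Once such an element is produced it contradicts $\ker\pms=\{e\}$, forcing $cr=1$.
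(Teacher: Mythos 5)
Your forward direction is correct and is a genuine (slight) variant of the paper's argument. The paper proceeds via transitivity of the proximal relation: when $cr=1$ there is a unique minimal left ideal, so the idempotent $p$ collapsing the proximal pair $(x,f(x))$ satisfies $ep=e$, giving $f=e$. You instead isolate and prove the clean fact that two distinct points of $e(X)$ are never proximal (via the invertibility of $epe$ in the group $\Hh_e=eEe$), and then only use that $\ker\pms$ moves each point within its fibre, where $cr=1$ makes all points mutually proximal. Your route avoids invoking uniqueness of the minimal left ideal; both arguments are valid and of comparable length.

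The converse, however, has a genuine gap, and you acknowledge it yourself: the step ``realise a non-trivial permutation of the fibre by an element of the structure group preserving \emph{all} fibres'' is exactly the content you would need to prove, and the sketch via $g(ex_1)=ex_2$ does not produce it --- there is no reason why one can modify $g$ so that ${\pmax}_*(g)=\id$ while still moving $ex_1$. The intended argument does not construct a fibre-permuting element from two non-proximal points at all; it runs entirely through the little structure group. Namely: if $cr>1$ and $T$ satisfies the stated hypothesis, then by part~3 of the preliminary Lemma (the Sacker--Sell collapsing argument, taken contrapositively) the proximal relation is \emph{not} transitive; by Lemma~\ref{lem-trans} non-transitivity of the proximal relation is equivalent to $\Gamma_e\neq\{e\}$; and $\Gamma_e\subseteq\ker\pms$ because minimal idempotents preserve fibres, so products of them automatically lie in the kernel. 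Hence $\pms$ is not injective. Equivalently, in your set-up: from $\ker\pms=\{e\}$ you correctly deduce $\Gamma_e=\{e\}$, but the next step should be ``hence the proximal relation is transitive'' (Lemma~\ref{lem-trans}), followed by part~3 of the preliminary Lemma to conclude $cr=1$; no construction of a new element of $\Hh_e$ is needed. The non-trivial kernel element, when it exists, is a product of two minimal idempotents that fails to be idempotent --- not a lift of a fibre permutation.
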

While the above isomorphism is a continuous bijection, it is not bi-continuous, as $eEe$ is not compact if $E$ contains more than one minimal idempotent. 
\begin{proof}
The proof of $1 \Rightarrow 2$ is as in Lemma~5.4 of \cite{ABKL} and works also for non-abelian $T$. It does not require $T$ to contain a compact set $K$ such that any open set containing $K$ generates $T$. Let's recall it: Let $f\in eEe$ with ${\pmax}_*(f) = \id$. Then $f$ preserves the fibres of $\pmax$. As $c=1$ all elements of a fibre are proximal. Hence $x\sim_p f(x)$ for all $x$. Hence there is an idempotent $p$ in the unique minimal left ideal such that $p(x) = pf(x)$. As also $e$ is in that minimal left ideal we have $ep = e$. This implies $e(x) = f(x)$, hence $f=e$. 

As for $2 \Rightarrow 1$ suppose that $cr>1$. As $T$ contains a compact set $K$ such that any open set containing $K$ generates $T$ this implies that the proximal relation is not transitive hence $\Gamma$ is not trivial. 
Hence $\ker{\pmax}_*$ is not trivial.
\end{proof}
We recall that $\Xmax$ is isomorphic to the quotient  
$E(\Xmax)/F_z$ by a subgroup of elements which have a given fixed point $\xi$. If $T$ is abelian, then there are no such elements.
\begin{cor}
If $E(\Xmax)$ acts fixed point freely then $cr=1$ implies that the structure group is small.
\end{cor}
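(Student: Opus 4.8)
The plan is to reduce the statement to a cardinality estimate for $E(\Xmax)$ and then to bound the latter by $\mathfrak{c}$. Since we assume $cr=1$, the preceding Lemma tells us that $\pms\colon\Hh_e\to E(\Xmax)$ is an isomorphism, so in particular $\card(\Hh_e)=\card(E(\Xmax))$. Recalling that \emph{small} here means cardinality at most $\mathfrak{c}$, it therefore suffices to show $\card(E(\Xmax))\le\mathfrak{c}$.

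To do this I would identify $E(\Xmax)$ with $\Xmax$ using the fixed-point-free hypothesis. As $\Xmax$ is equicontinuous it is distal, so $E(\Xmax)$ is a group acting on $\Xmax$ \cite{Auslander,Hindman}. Fix a point $z\in\Xmax$ and consider the evaluation map $\mathrm{ev}_z\colon E(\Xmax)\to\Xmax$, $g\mapsto g(z)$. By minimality of $\Xmax$ the orbit $E(\Xmax)\,z$ equals the closure of $\{\alpha^t(z):t\in T\}$, which is all of $\Xmax$, so $\mathrm{ev}_z$ is surjective; its fibres are the cosets of the stabiliser $F_z=\{g\in E(\Xmax):g(z)=z\}$. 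The hypothesis that $E(\Xmax)$ acts fixed point freely says precisely that only the identity fixes a point, hence $F_z=\{\id\}$ and $\mathrm{ev}_z$ is a bijection. This is the identification $\Xmax\cong E(\Xmax)/F_z$ recalled just before the statement, now with $F_z$ trivial. (For abelian $T$ this fixed-point-freeness is automatic, as already noted there.)

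Finally $\Xmax$ is compact metrisable, hence separable, so $\card(\Xmax)\le\mathfrak{c}$. Chaining the identifications gives $\card(\Hh_e)=\card(E(\Xmax))=\card(\Xmax)\le\mathfrak{c}$, which is the assertion. I do not expect any serious obstacle: the two points needing a word of care are that $E(\Xmax)$ is genuinely a group, so that $\pms$ is a group isomorphism and $F_z$ an honest subgroup, and that ``fixed point free'' must be read as ``only the identity has a fixed point'', which forces $F_z$ to be trivial rather than merely of small index. Both are immediate, the first from distality of the equicontinuous factor and the second from the surjectivity of $\mathrm{ev}_z$ provided by minimality.
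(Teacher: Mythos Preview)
Your argument is correct and is exactly the route the paper sets up: the preceding Lemma gives $\Hh_e\cong E(\Xmax)$ when $cr=1$, and the sentence just before the Corollary recalls that $\Xmax\cong E(\Xmax)/F_z$, so fixed-point-freeness collapses $F_z$ to $\{\id\}$ and yields $\card(\Hh_e)=\card(\Xmax)\le\mathfrak{c}$. The paper leaves the Corollary unproved because it is immediate from these two ingredients, and you have filled in precisely those details.
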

%{\tt I believe that $E(\Xmax)$ is always small.}

%%%%%%%%%%%%%%%%%%%%
\section{The structure group for systems with non-transitive proximal relation}
In this section we show that, for minimal systems with abelian group action, non-triviality of the little structure group together with an extra condition on the set of singular points implies that the structure group has cardinatlity $2^\mathfrak{c}$. This is a partial converse of the results of the last sections where we showed that $cr=1$ implies that the structure group is isomorphic to $E(X_{max})$, a small group at least if its subgroup of elements which have a fixed point is small. Furthermore, we saw that $cr=1$ is implied if the system is almost automorphic, or, if  the little structure group is trivial and $T$ contains a compact set $K$ such that any open set containing $K$ generates $T$.

%%%%%%%%%%%%%%%%
\subsection{The proximal relation and the little structure group}
As is well known, the proximal relation can be studied with the help of the Ellis semigroup. Indeed, $x,y\in X$ are proximal if and only if there exists a minimal idempotent $p\in E$ such that $p(x)=p(y)$. If the system is minimal then this can be strengthened: 
%because for minimal systems, given $x\in X$ one can always find a minimal idempotent $p\in E$ such that $p(x) = x$.
\begin{lemma}[\cite{Auslander} p.~89, Thm.~13(iii))]
Let $(X,T)$ be minimal. Then $x,y\in X$ are proximal if and only if there exists a minimal idempotent $p$ such that $p(y)=x$.
\end{lemma}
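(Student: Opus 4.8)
The plan is to reduce both implications to the fact recalled immediately before the statement, that $x\sim_p y$ if and only if $p(x)=p(y)$ for some minimal idempotent $p$. The implication "$\Leftarrow$" is then immediate: if $p$ is a minimal idempotent with $p(y)=x$, then, $p$ being idempotent, $p(x)=p(p(y))=p(y)$, so $x\sim_p y$. For "$\Rightarrow$" the idea is to take a minimal idempotent $q$ witnessing proximality in the weak sense, $q(x)=q(y)=:z$, and to "correct" it, using minimality of $(X,T)$, so that the common value is moved back to $x$.

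Concretely, I would first note that $Eq=\rho_q(E)$ is a closed left ideal (a continuous image of the compact semigroup $E$), contained in $\ker E$ because $q\in\ker E$ and $\ker E$ is a bilateral ideal. Next, $(Eq)z=\{f(z):f\in Eq\}$ is non-empty, closed (continuous image of the compact set $Eq$ under evaluation at $z$), and invariant under every $\alpha^t$ since $\alpha^t f\in E\cdot Eq\subseteq Eq$; by minimality of $(X,T)$ it is therefore all of $X$. Hence there is $g\in Eq$ with $g(z)=x$. Since $g\in Eq$ and $q$ is idempotent one has $gq=g$, i.e. $g\circ q=g$ as maps $X\to X$; combined with $z=q(x)=q(y)$ this yields $g(x)=g(q(x))=g(z)=x$ and, likewise, $g(y)=g(q(y))=g(z)=x$.

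It remains to replace $g$ by an idempotent with the same behaviour on $x$ and $y$. For this I would look at $S:=\{h\in Eq:\ h(x)=x\text{ and }h(y)=x\}$. It is non-empty (it contains $g$), closed, and closed under composition — if $h,h'\in S$ then $(hh')(x)=h(h'(x))=h(x)=x$ and similarly $(hh')(y)=x$ — hence a non-empty compact right-topological semigroup, which by Ellis's theorem contains an idempotent $p$. Since $p\in Eq\subseteq\ker E$, it is a minimal idempotent, and $p(y)=x$ by the definition of $S$, as required.

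The only delicate point — what I would regard as the "obstacle" — is this last step: to ensure that the idempotent we extract actually sends $y$ to $x$, one has to build the constraints "$h(x)=x$ and $h(y)=x$" into a sub\emph{semigroup} before applying Ellis's theorem, rather than picking an arbitrary idempotent of $Eq$; and one has to observe that this idempotent is automatically \emph{minimal} because $Eq$ lies inside the kernel. Everything else is routine manipulation of compactness, continuity of the evaluation maps $f\mapsto f(z)$, the right-topological structure, and minimality of $(X,T)$.
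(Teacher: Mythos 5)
Your proof is correct, but the forward implication takes a genuinely different route from the paper's. The paper first manufactures a minimal idempotent fixing $x$ by invoking the Rees structure of $\ker E$: it uses minimality to find $f\in\ker E$ with $f(x)=x$, takes the inverse $f^{-1}$ inside the group $\Hh_{p}$ containing $f$ to get the idempotent $p=ff^{-1}$ fixing $x$, and then combines $p$ with the witness $q$ (satisfying $q(x)=q(y)$) via the unique idempotent $r$ in the intersection of the minimal right ideal of $p$ with the minimal left ideal of $q$, computing $r(y)=x$ from the relations $rp=p$ and $rq=r$. You instead bypass the Rees structure entirely: after using minimality in essentially the same way (to find $g\in Eq$ with $g(q(x))=x$, hence $g(x)=g(y)=x$), you encode both constraints into the closed subsemigroup $S=\{h\in Eq: h(x)=x,\ h(y)=x\}$ and extract an idempotent by the Ellis--Numakura lemma, observing that any idempotent of $Eq\subseteq\ker E$ is automatically minimal. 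Your version is more self-contained at the level of semigroup algebra (no group inverses, no idempotent matrix) but leans on the idempotent lemma for compact right-topological semigroups; the paper's version is purely algebraic once the Rees decomposition of $\ker E$, already set up in Section~3, is taken for granted. All the individual steps you give (closedness of $Eq$ and of $S$, $T$-invariance of $(Eq)z$, stability of $S$ under composition, minimality of idempotents in the kernel) check out.
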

\begin{proof} We first show that for any $x\in X$ one can find a minimal idempotent $p\in E$ such that $p(x) = x$. Let $x\in X$. Let $q$ be any minimal idempotent and $y=q(x)$. By minimality its $T$-orbit is dense in $X$ hence $E(y)=X$. Thus there is $f\in E$ such that $x=f(y)= fq(x)$. As $q$ is minimal, $fq\in \ker E$. Upon replacing $f$ by $fq$ we may thus assume $x=f(x)$ for some $f\in\ker E$. By the Rees structure theorem
$f$ belongs to one of the groups $\Hh_{p}$, $p\in J_{min}$. Let $f^{-1}$ be its inverse in this group. Then $p =   f f^{-1}$ is a minimal idempotent and $p(x) =  ff^{-1}(x) = ff^{-1}f(x) = f(x) = x$.
  
Let $x,y\in X$ be proximal. Let $p$ be a minimal idempotent such that $p(x) = x$.
Let $q$ be a minimal idempotent such that $q(x)=q(y)$. Let $r$ be the (unique) minimal idempotent in the intersection of the minimal right ideal to which $p$ belongs with the minimal left ideal to which $q$ belongs.
Then $rp=p$ and $rq=r$. Hence
$$x = p(x) = rp(x) = rqp(x) = rq(x)=rq(y) = r(y).$$
As for the converse, $p(y) = x$ implies $p(y)=p(x)$ hence $x$ and $y$ are proximal.
\end{proof}
It is well-known that the proximal relation is transitive if and only if $E$ has a unique minimal left ideal, that is $\ker E$ is left simple. We can relate this also to the triviality of the little structure group.

Note that the little structure group is trivial if and only if the product of two idempotents is an idempotent. Indeed, the only idempotent in $pJ_{min} p$ is $p$. Moreover, if the product $pq$ of two minimal idempotents $p,q\in J_{min}$ is not an idempotent and $p'$ is such that $pq\in\Hh_{p'}$ (such a $p'$ must exist by the above) then $pq=p'pqp'\neq p'$ hence $p'J_{min} p'$ is not trivial.
Semigroups whose idempotents form a subsemigroup are called orthodox. So $\Gamma$ is trivial if and only if $\ker E$ is orthodox.

Let $(X,T)$ be minimal with maximal equicontinuous factor $\pi:X\to \Xmax$. Let $\xi\in \Xmax$. 
%We denote the restriction of $\Gamma$ to $\pi^{-1}(\xi)$ by $\Gamma_\xi$.
\begin{lemma} \label{lem-trans}
Let $(X,T)$ be minimal with maximal equicontinuous factor $\pi:X\to \Xmax$. Let $\xi\in \Xmax$. The proximal relation restricted to $\pi^{-1}(\xi)$ is transitive if and only if $\Gamma_e$ acts trivially on $e\fb{\xi}$. In particular, the proximal relation is transitive if and only if 
the little structure group $\Gamma_e$ is trivial.
\end{lemma}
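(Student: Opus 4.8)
The plan is to work throughout with a fixed minimal idempotent $e$, its minimal right ideal $eE$, and the retraction $e\colon X\to e(X)$, and to show that transitivity of $\sim_p$ on $\fb{\xi}$ amounts to the proximal classes inside $\fb{\xi}$ being exactly the fibres of $e|_{\fb{\xi}}$. I would first record three elementary facts. (i) For $p\in J_{min}$ and $z\in X$ one has $z\sim_p p(z)$, since $p$ itself is a witness in the characterization ``$x\sim_p y$ iff some $p\in J_{min}$ has $p(x)=p(y)$''. (ii) $eE$ is the minimal right ideal containing $e$; hence every $q\in eE\cap J_{min}$ satisfies $qe=e$, so $q$ restricts to the identity on $e(X)$, and as $q\in eE$ also $q(X)\subseteq e(X)$. (iii) The sharpened characterization: $x\sim_p y$ iff some $q\in eE\cap J_{min}$ has $q(x)=q(y)$. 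For the nontrivial direction, from a witness $p$ with $p(x)=p(y)$ one gets $ep(x)=ep(y)$ with $ep\in\ker E$ (an ideal); then $ep$ lies in one of the groups $\Hh_q$, $q\in J_{min}$, and since $ep\in eE$ with $eE$ a minimal right ideal one has $q\in eE$; applying the inverse of $ep$ in $\Hh_q$ to $ep(x)=ep(y)$ gives $q(x)=q(y)$. Finally, $\Gamma_e\subseteq eEe$ maps each $e\fb{\xi}$ into itself (it lies in the kernel of $\pms$, and $eEe$ maps into $e(X)$), and since $\Gamma_e$ is generated by the elements $epe$, $p\in J_{min}$, with $(epe)(w)=e(p(w))$ for $w\in e(X)$, the statement ``$\Gamma_e$ acts trivially on $e\fb{\xi}$'' is equivalent to
\[(\ast)\qquad e(p(w))=w\quad\text{for all }p\in J_{min},\ w\in e\fb{\xi}.\]

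To prove transitivity $\Rightarrow(\ast)$, I would first observe that transitivity of $\sim_p$ on $\fb{\xi}$ forces $q|_{\fb{\xi}}=e|_{\fb{\xi}}$ for every $q\in eE\cap J_{min}$: by (i), $q(y)\sim_p y\sim_p e(y)$, so $q(y)\sim_p e(y)$; both points lie in $e(X)$ by (ii), so (iii) yields $q'\in eE\cap J_{min}$ with $q'(q(y))=q'(e(y))$, and since $q'$ fixes $e(X)$ pointwise this reads $q(y)=e(y)$. Then, for $p\in J_{min}$ and $w\in e\fb{\xi}$, fact (i) gives $w\sim_p p(w)$ inside $\fb{\xi}$, (iii) gives $q\in eE\cap J_{min}$ with $q(w)=q(p(w))$, (ii) gives $q(w)=w$, and the previous step gives $q(p(w))=e(p(w))$; hence $w=e(p(w))$, which is $(\ast)$.

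To prove $(\ast)\Rightarrow$ transitivity, I would compute the proximal classes in $\fb{\xi}$ directly. Write $P[z]=\{p(z):p\in J_{min}\}$ for the proximal class of $z$, as recalled in the preliminaries. For $w\in e\fb{\xi}$, $(\ast)$ gives $P[w]\subseteq e^{-1}(w)$, while $e(z)=w$ implies $z\sim_p e(z)=w$ by (i); so $P[w]=e^{-1}(w)$. For arbitrary $x\in\fb{\xi}$, if $z\sim_p x$ then by (iii) some $q\in eE\cap J_{min}$ has $q(x)=q(z)=:u\in e\fb{\xi}$, whence $x,z\in P[u]=e^{-1}(u)$ and so $e(x)=e(z)=u$; together with the converse (via the idempotent $e$) this gives $P[x]=e^{-1}(e(x))$. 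Thus the proximal classes in $\fb{\xi}$ are precisely the fibres of $e|_{\fb{\xi}}$, which partition $\fb{\xi}$, so $\sim_p$ is transitive on $\fb{\xi}$. For the ``in particular'': $\sim_p$ is transitive iff it is transitive on every $\fb{\xi}$ iff $\Gamma_e$ acts trivially on every $e\fb{\xi}$, i.e. on $e(X)$; and $g\in\Gamma_e\subseteq eEe$ acting trivially on $e(X)$ means $g\circ e=e$, hence $g=g\circ e=e$, so $\Gamma_e$ is trivial.

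The delicate step is fact (iii): one may fix the minimal \emph{right} ideal $eE$ to which a witnessing idempotent belongs, but not the minimal left ideal — pinning down the left ideal would (falsely) make $\sim_p$ transitive outright. It is exactly this asymmetry, which manifests itself in the observation that distinct points of $e(X)$ are never proximal, that makes the argument run; once (iii) and the ensuing identity $P[x]=e^{-1}(e(x))$ under $(\ast)$ are in hand, the remainder is manipulation of the Rees structure identities recalled above.
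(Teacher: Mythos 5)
Your proof is correct and follows essentially the same route as the paper's: both directions reduce transitivity of $\sim_p$ on $\fb{\xi}$ to how minimal idempotents and their products act on the fibre, using the idempotent characterizations of proximality and the Rees structure of $\ker E$. The only difference is bookkeeping — you normalize all witnessing idempotents into the fixed minimal right ideal $eE$ and make explicit that distinct points of $e(X)$ are never proximal, which is exactly the fact underlying the paper's unjustified step $pq(x)=p(x)$ and its claim that $qp$ acts as an idempotent on $\fb{\xi}$.
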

\begin{proof} Suppose that the proximal relation restricted to $\pi^{-1}(\xi)$ is transitive and hence an equivalence relation. Let $\Pp$ be the associated partition of $\pi^{-1}(\xi)$ and $M$ a member of $\Pp$. Let $p,q$ be minimal idempotents of $E$. Then, as $p(x)$ is proximal to $x$, $p(x)\in M$ for all $x\in M$. This implies $pq(x)=p(x)$ for all $x\in\pi^{-1}(\xi)$. Hence the product of two minimal idempotents acts as an idempotent on $\pi^{-1}(\xi)$. Therefore $\Gamma_e$ acts trivially on $e\fb{\xi}$.

Suppose that $\Gamma_e$ acts trivially on $e\fb{\xi}$.
Let $(x,y)$ and $(y,z)$ be proximal pairs in $\pi^{-1}(\xi)$. As the system is minimal there exist minimal idempotents $p,q$ s.th.\ $p(x)=y$ and $q(y)=z$. Hence $qp(x) = z$. By assumption, $qp$ acts as an idempotent on $\pi^{-1}(\xi)$. Hence $qp(z) = qp(x)$ showing that 
$(x,z)$ is a proximal pair. 

The last statement follows as $\Gamma_e$ acts trivially on $e\pi^{-1}(\xi)$ for all $\xi$ if and only if $\Gamma_e=\{e\}$.
\end{proof}

\subsection{When the structure group is huge}
We now consider minimal systems for which the proximal relation is not transitive and hence $\Gamma_e$ not trivial. This excludes finite systems and so we assume now that $X$ is uncountable. 
Define $K_e\subset \Hh_e$ to be the subgroup of elements which preserve the fibres of $\pmax$. Clearly $\Gamma_e$ is a subgroup of $K_e$. 
We assume that $T$ is abelian and hence $\Xmax$ admits a group structure which we denote additively and its neutral element by $0$. It is well known that $E(\Xmax)$ is a group which is isomorphic to $\Xmax$, an isomorphism is given by $\ev$, the evaluation at $0$. 
We have an exact sequence of groups 
$$K_e\hookrightarrow \Hh_e \stackrel{\ev \circ \pms}\twoheadrightarrow \Xmax$$
where $\pi_*:E(X)\to E(\Xmax)$ is the epimorphism induced by $\pi$.
Choose a right inverse
$s:\Xmax\to \Hh_e$ to $\ev \circ \pms$ which intertwines the $T$ actions, i.e,\ 
 $\ev\circ\pmax_*\circ s = \id$ and $s\circ \delta^t = \alpha^t\circ s$ ($s$ need not be a group homomorphism). 

Let $f\in \Hh_e$. We say that $f$ acts trivially at $\xi\in\Xmax$ if all points of $e\fb{\xi}$ are fixed points of $f$. We
define the support of $f$ to be the set of points at which $f$ acts non-trivially. 
%Note that all $f\in eEe$ with $\psi(f)\neq x_0$ have full support.
Recall that a point $\xi\in \Xmax$ is called regular if $\fb{\xi}$  contains only distal points and denote by $\Xmax^{sing}$ the complement of the regular points.
\begin{lemma}
Let $e\neq f\in \Gamma_e$. 
Then $\emptyset\neq \mathrm{supp}(f) \subset \Xmax^{sing}$.
\end{lemma}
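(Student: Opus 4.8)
The plan is to treat the two assertions $\mathrm{supp}(f)\neq\emptyset$ and $\mathrm{supp}(f)\subset\Xmax^{sing}$ separately; both should follow quickly from material already at hand. Throughout I would use that $e$ is the neutral element of the group $\Hh_e=eEe$, so that $ef=fe=f$ for every $f\in\Hh_e$; that $e$, being an idempotent, preserves the fibres of $\pmax$; and that $X$ is the disjoint union of the fibres $\fb{\xi}$, so that $e(X)=\bigcup_{\xi\in\Xmax}e\fb{\xi}$.

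For non-emptiness I would argue by contradiction. Suppose $\mathrm{supp}(f)=\emptyset$; then $f$ acts trivially at every $\xi\in\Xmax$, i.e.\ every point of $e\fb{\xi}$ is fixed by $f$, so $f$ fixes every point of $e(X)$. For an arbitrary $x\in X$ we then compute $f(x)=(fe)(x)=f(e(x))=e(x)$, since $e(x)\in e\fb{\pmax(x)}\subset e(X)$ is a fixed point of $f$. As $x$ was arbitrary this gives $f=e$, contradicting the hypothesis $f\neq e$. Hence $\mathrm{supp}(f)\neq\emptyset$.

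For the inclusion it suffices to show that $f$ acts trivially at every regular point $\xi\in\Xmax$. If $\xi$ is regular, then $\fb{\xi}$ consists of distal points only, so the proximal relation restricted to $\fb{\xi}$ is the diagonal (if $x,y\in\fb{\xi}$ with $x\sim_p y$ and $x$ distal then $x=y$), and in particular it is transitive. By Lemma~\ref{lem-trans} this forces $\Gamma_e$, and hence $f$, to act trivially on $e\fb{\xi}$, i.e.\ $\xi\notin\mathrm{supp}(f)$. Therefore $\mathrm{supp}(f)\subset\Xmax^{sing}$.

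I do not expect a genuine obstacle here: the argument is essentially an unwinding of the definition of the support together with the (easy direction of the) equivalence in Lemma~\ref{lem-trans}. The one spot where a little care is needed is the reduction ``empty support $\Rightarrow f=e$'', which relies on the identification $e(X)=\bigcup_{\xi}e\fb{\xi}$ and on $e$ being the identity of $\Hh_e$; once those are spelled out, both halves of the statement are immediate.
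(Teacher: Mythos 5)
Your proof is correct and follows essentially the same route as the paper, which simply declares the lemma a ``direct consequence'' of Lemma~\ref{lem-trans}: the inclusion $\mathrm{supp}(f)\subset\Xmax^{sing}$ comes from the fact that on a regular fibre the proximal relation is the diagonal (hence transitive), and non-emptiness from the identity $f=fe$ in $\Hh_e$. You have merely written out the details the paper leaves implicit, and they check out.
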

\begin{proof} Direct consequence of (\ref{lem-trans}).
\end{proof}
\begin{lemma}\label{lem-shift}
Let $e\neq f\in K_e$ and $a\in\Xmax$. The support of $s(a) f s(a)^{-1}$ is $\supp(f)+a$. 
\end{lemma}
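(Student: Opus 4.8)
The plan is to show that conjugation by $s(a)$ implements, fibrewise, a shift of the base point by $a$, and then to read off the effect on supports. Throughout, write $g:=s(a)\,f\,s(a)^{-1}$, an element of the group $\Hh_e$.

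First I would pin down how $s(a)$ moves the fibres of $\pmax$. Since $\ev\circ\pmax_*\circ s=\id$ we have $\pmax_*(s(a))(0)=a$; recall that $E(\Xmax)$ is the group $\Xmax$ acting on itself by translations (the isomorphism being $\ev$), so $\pmax_*(s(a))$ is the translation $\eta\mapsto\eta+a$. From the defining relation $\pmax\circ s(a)=\pmax_*(s(a))\circ\pmax$ it follows that $s(a)$ sends $\fb{\xi}$ into $\fb{\xi+a}$, and correspondingly $s(a)^{-1}$ sends $\fb{\xi}$ into $\fb{\xi-a}$. Using that $s(a),s(a)^{-1}\in\Hh_e=eEe$ obey the absorption identities $e\,s(a)=s(a)\,e=s(a)$ (and likewise for $s(a)^{-1}$), that $s(a)\,s(a)^{-1}=s(a)^{-1}s(a)=e$ in $\Hh_e$, and that $e$ restricts to the identity on each set $e\fb{\eta}$, one checks that $s(a)$ restricts to a bijection $e\fb{\xi-a}\to e\fb{\xi}$ whose inverse is the restriction of $s(a)^{-1}$. (One also gets $\pmax_*(g)=\id$ since $E(\Xmax)$ is abelian and $\pmax_*(f)=\id$, so $g\in K_e$ and its support is indeed defined.)

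Next I would unwind the support condition at a fixed $\xi\in\Xmax$. For $x\in e\fb{\xi}$ set $x':=s(a)^{-1}(x)\in e\fb{\xi-a}$. Applying $s(a)^{-1}$ to the equation $g(x)=x$ and simplifying with $s(a)^{-1}s(a)=e$ and $e\,f\,s(a)^{-1}=f\,s(a)^{-1}$, one finds that $g(x)=x$ is equivalent to $f(x')=x'$. Since $x\mapsto x'$ runs bijectively over $e\fb{\xi-a}$ as $x$ runs over $e\fb{\xi}$, this shows that $g$ acts trivially at $\xi$ if and only if $f$ acts trivially at $\xi-a$; hence $\xi\in\supp(g)$ iff $\xi-a\in\supp(f)$, i.e.\ $\supp(g)=\supp(f)+a$, which is the assertion. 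The only point that needs genuine care — and the spot where a slip would be easy — is the idempotent bookkeeping that makes the restrictions of $s(a)$ and $s(a)^{-1}$ mutually inverse bijections between $e\fb{\xi-a}$ and $e\fb{\xi}$; once that is in hand, the rest is a short conjugation computation with no real obstacle.
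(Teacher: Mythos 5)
Your proof is correct and follows essentially the same route as the paper's: unwind the definition of support at a point $\xi$ and transport it to $\xi-a$ via the mutually inverse bijections $s(a)^{\pm1}$ between $e\fb{\xi-a}$ and $e\fb{\xi}$. The paper's version is terser (it asserts the equivalence of the two existential statements without spelling out the fibre-shifting and idempotent bookkeeping), so your write-up is just a more detailed rendering of the same argument.
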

\begin{proof}
Let $e\neq f\in K_e$. 
Let $\xi\in \supp(f)+a$, that is, $\xi-a\in \supp(f)$. This is the case iff $\exists x\in e\fb{\xi-a}:f(x)\neq x$. This is equivalent to 
$\exists x'\in e\fb{\xi}:s(a)fs(a)^{-1}(x') \neq x'$ which, by definition, means $\xi\in \supp(s(a)fs(a)^{-1})$.
%
%$\exists x'\in e\fb{\xi}:f(x)\neq x$.
%We have $\xi\in \supp f$ if and only if $\exists x\in e\fb{\xi}:f(x)\neq x$. Hence 
%$\pi(s(a)^{-1}(x))\in \supp f$ iff $\exists x\in e\fb{\xi}:fs(a)^{-1}(x) \neq s(a)^{-1}(x)$. As $\pi(s(a)^{-1}(x))=\xi-a$ we see that $s(a)fs(a)^{-1}x \neq x$ iff 
%$\xi-a\in \supp(f)$.%, that is, $\xi\in \supp(f)+a$.
\end{proof}
\begin{theorem}
Let $(X,T)$ be a minimal system with abelian $T$.
If there is $e\neq f\in K_e$ and an uncountable set $A\subset X_{max}$ such that for all $a\neq b\in A$ we have $$(\supp(f)+a)\cap (\supp(f)+b) = \emptyset$$ then the Ellis group has cardinality $2^\mathfrak{c}$. 
\end{theorem}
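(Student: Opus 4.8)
The plan is to exploit the disjointness of the supports to build an injection from the full power set $\mathcal{P}(A)$ (equivalently, from $2^{\mathfrak c}$, since $A$ is uncountable so $\card(A)\geq\aleph_0$ — in fact we only need $\card(A)=\mathfrak c$, which we may assume by passing to a subset, or we argue directly with $2^{\card(A)}$ and note $\card(A)$ can be taken to be $\mathfrak c$ as $X_{\max}$ is a compact metrisable group of cardinality $\mathfrak c$) into the structure group $\Hh_e$. The building blocks are the conjugates $f_a := s(a)\,f\,s(a)^{-1}$ for $a\in A$. By Lemma~\ref{lem-shift} each $f_a$ lies in $K_e$ (conjugation by $s(a)$ preserves $K_e$, since $s(a)$ intertwines the $T$-action and $K_e$ is the fibre-preserving subgroup, normal in $\Hh_e$), and $\supp(f_a)=\supp(f)+a$. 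The hypothesis says these supports are pairwise disjoint as $a$ ranges over $A$.

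The key step is to show that for any subset $S\subseteq A$ the (possibly infinite) product $\prod_{a\in S} f_a$ is well-defined as an element of $\Hh_e$, and that distinct subsets give distinct elements. Well-definedness is where I expect the main subtlety: $\Hh_e$ is not compact and an infinite product of semigroup elements need not converge, so one cannot simply multiply the $f_a$ together. Instead I would define $g_S \in F(X)$ pointwise: for $x\in X$ with $\pmax(x)=\xi$, if $\xi\notin\bigcup_{a\in S}\supp(f_a)$ set $g_S(x)=e(x)$; if $\xi\in\supp(f_a)$ for the unique $a\in S$ with this property, set $g_S(x)=f_a(x)$. Disjointness of supports makes this unambiguous. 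One then checks $g_S\in E(X)$ — this is the heart of the argument — by showing $g_S$ is a pointwise limit of a net built from the $\alpha^t$; concretely, since each $f_a\in\ker E$ and the supports are disjoint, on any finite subset of $X$ only finitely many $f_a$ are "active", and one approximates $g_S$ on finite sets by choosing group elements in $T$ that simultaneously approximate $e$ off the relevant supports and the finitely many active $f_a$ on their respective (finitely many sampled) fibres. Then $g_S\in e E e = \Hh_e$ because $e g_S e$ agrees with $g_S$ pointwise: $e$ fixes $e\fb{\xi}$, $g_S$ preserves each $e\fb{\xi}$ (it equals $e$ or some $f_a\in K_e$ on it), so $e g_S e = g_S$.

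Finally, injectivity: if $S\neq S'$, pick $a\in S\triangle S'$, say $a\in S\setminus S'$; since $f\neq e$ in $K_e$, by Lemma~\ref{lem-shift} $\supp(f_a)=\supp(f)+a\neq\emptyset$, so there is $\xi\in\supp(f_a)$ and $x\in e\fb{\xi}$ with $f_a(x)\neq x$. By disjointness, $\xi$ lies in no $\supp(f_b)$ for $b\in S'$, so $g_{S'}(x)=e(x)=x$ while $g_S(x)=f_a(x)\neq x$. Hence $S\mapsto g_S$ is injective, giving $\card(\Hh_e)\geq 2^{\card(A)}=2^{\mathfrak c}$. Since $\Hh_e\subseteq E(X)\subseteq F(X)$ and $\card(F(X))=\mathfrak c^{\mathfrak c}=2^{\mathfrak c}$, we get $\card(\Hh_e)=2^{\mathfrak c}$. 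The main obstacle, as noted, is the verification that $g_S$ actually belongs to the Ellis semigroup; this is precisely the place where the method of \cite{KY} is used, and one should phrase the approximation carefully using that $E(X)$ is closed in $F(X)$ and that each $f_a$, being in $\ker E$, is itself a pointwise limit of the $\alpha^t$ that can be "localized" via the intertwining property of $s$.
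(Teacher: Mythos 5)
Your overall architecture is the paper's: the conjugates $f_a=s(a)fs(a)^{-1}$ with pairwise disjoint supports, the element $g_S$ that acts as $f_a$ over $\supp(f_a)$ for $a\in S$ and as $e$ elsewhere, and the injectivity of $S\mapsto g_S$ via a point moved by exactly one $f_a$, are all exactly what the paper does. The genuine gap is in the step you yourself flag as the heart of the matter, $g_S\in E(X)$, where you talk yourself out of the mechanism that actually works. You assert that ``one cannot simply multiply the $f_a$ together''; but multiplying \emph{finitely} many of them is precisely the point. For a finite $F=\{a_1,\dots,a_k\}\subset S$ the product $f_F:=f_{a_1}\cdots f_{a_k}$ is an honest element of $\Hh_e$, and since each $f_{a_i}$ preserves fibres and acts as the identity on $e\fb{\xi}$ whenever $\xi\notin\supp(f_{a_i})$, disjointness of the supports gives that for every $x$ the value $f_F(x)$ equals $f_a(x)$ for the at most one active $a\in F$ and $e(x)$ otherwise; in particular $f_F(x)$ stabilises once $F$ contains the unique active $a\in S$ (if any). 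Hence the net $(f_F)$, indexed by the finite subsets of $S$ ordered by inclusion, converges pointwise to your $g_S$. Since $Ee$ is compact (the continuous image of $E$ under right multiplication by $e$) and hence closed in $F(X)$, this gives $g_S\in Ee$; and since each $f_F(x)$ lies in $e\fb{\pmax(x)}$ and is eventually constant, $eg_S=g_S$, so $g_S\in eEe=\Hh_e$. Your proposed substitute --- choosing single elements $\alpha^t$ that ``simultaneously approximate $e$ off the supports and the finitely many active $f_a$ on their fibres'' --- is not justified as stated: there is no a priori reason such a $t$ exists until you already know that the patchwork function being approximated lies in $E$, and the only visible way to know that is to recognise it as the finite product $f_F$, which is in $E$ because $E$ is a semigroup. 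Made rigorous, your sketch collapses onto the paper's argument; as written, it leaves the decisive step unproved.

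A minor remark: your hesitation about $\card(A)$ versus $\mathfrak c$ is legitimate (uncountability alone only gives $2^{\card(A)}\geq 2^{\aleph_1}$, and passing to a subset cannot increase cardinality), but the paper glosses over the same point; in the intended applications $A$ is a transversal of an analytic subgroup of a Polish group, so uncountability does force cardinality $\mathfrak c$.
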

\begin{proof}
Let $f$ and $A$ as in the theorem. As $f\neq e$ its support is not empty. Given $a\in A$ set $f_a=s(a)f
s(a)^{-1}$. By Lemma~\ref{lem-shift} we have $\supp(f_a) = \supp(f)+a$. Let $a_1,\cdots,a_k$ be $k$ distinct points of $A$ and $f_{\{a_1,\cdots,a_k\}} = f_{a_1} \cdots f_{a_k}$, the product. As 
$(\supp(f)+a_i)\cap (\supp(f)+a_j) = \emptyset$ for $i\neq j$ we have
$$\supp(f_{\{a_1,\cdots,a_k\}}) = \bigcup_{i=1}^k (\supp(f)+a_i).$$
Let $B\subset A$ and consider the directed system of all finite subsets $F_\nu\subset B$ of $B$, ordered by inclusion. The the net $f_{F_\nu}$ converges in the pointwise topology to the element $f_B$ which, at $\xi\in \supp(f)+b$, $b\in B$ acts as $f_b$ while it acts trivially at all points in the complement of $\supp(f)+B$. As $Ee$ is closed, $f_B\in Ee$. Moreover, for all $\xi\in\Xmax$ and $x\in\fb{\xi}$ the net $f_{F_\nu}(x)$ becomes eventually constant. Therefore, and since  $f_{F_\nu}(x)\in  e\fb{\xi}$, we must have $ef_B=f_B$ hence $f_B\in\Hh_e$. By construction, $f_B\neq f_{B'}$ if $B$ and $B'$ are different subsets of $A$. Hence $B\mapsto f_B$ is an injective map from the power set of $A$ to $\Hh_e$.
\end{proof}

Given $f\in K_e$ let $\Ll_f$ be the subgroup of $\Xmax$ generated by differences $\xi-\xi'\in \supp(f)-\supp(f)$.
\begin{cor}
Let $(X,T)$ be a minimal system with abelian $T$. 
If there is $f\in K_e$ such that 
$\Ll^f$ is not empty and $\Xmax/\Ll_f$ is uncoutable then the structure group has cardinatlity $2^\mathfrak{c}$.
\end{cor}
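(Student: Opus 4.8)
The plan is to deduce this at once from the preceding theorem: from the hypotheses I will produce an uncountable set $A\subset\Xmax$ such that the translates $\supp(f)+a$, $a\in A$, are pairwise disjoint, and then apply that theorem to this $f$ and this $A$. First note that the hypothesis $\Ll^f\neq\emptyset$ gives $\supp(f)\neq\emptyset$, equivalently $f\neq e$: if $f=e$ then, since $f=efe$ with $e$ idempotent, $f$ fixes every point of every fibre $e\fb{\xi}$, so $\supp(f)=\emptyset$. Thus $f$ already satisfies the first requirement of the preceding theorem, and the whole matter reduces to constructing $A$.

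The key observation is the elementary identity
\[
(\supp(f)+a)\cap(\supp(f)+b)\neq\emptyset \quad\Longleftrightarrow\quad a-b\in\supp(f)-\supp(f),
\]
valid for all $a,b\in\Xmax$: a point $\xi$ lying in both translates can be written $\xi=\eta+a=\eta'+b$ with $\eta,\eta'\in\supp(f)$, whence $a-b=\eta'-\eta$; conversely $a-b=\eta'-\eta$ produces the common point $\eta+a=\eta'+b$. Since, by definition, $\Ll_f$ is the subgroup of $\Xmax$ generated by $\supp(f)-\supp(f)$, we have $\supp(f)-\supp(f)\subseteq\Ll_f$. Consequently, whenever $a$ and $b$ lie in different cosets of $\Ll_f$ we get $a-b\notin\Ll_f$, a fortiori $a-b\notin\supp(f)-\supp(f)$, so the two translates are disjoint. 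It therefore suffices to take $A$ to be a transversal for the cosets of $\Ll_f$ in $\Xmax$ (one point from each coset): since $\Xmax/\Ll_f$ is uncountable, $A$ is uncountable, and by the previous sentence the translates $\supp(f)+a$, $a\in A$, are pairwise disjoint. Applying the preceding theorem to $f$ and $A$ now gives that the structure group $\Hh_e$ has cardinality $2^{\mathfrak c}$.

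I expect no genuine obstacle here: the argument is a pure reduction, and the only step calling for a moment's care is the coset reformulation of translate-disjointness above, together with the inclusion $\supp(f)-\supp(f)\subseteq\Ll_f$. The passage from the uncountability of $\Xmax/\Ll_f$ to a large transversal $A$, and thence to the cardinality $2^{\mathfrak c}$, is precisely what the preceding theorem already packages, so nothing beyond that is needed.
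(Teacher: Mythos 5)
Your proposal is correct and is essentially identical to the paper's own proof: the paper also takes $A$ to be the image of a section of the quotient map $\Xmax\to\Xmax/\Ll_f$ (i.e.\ a transversal), notes that a nonempty intersection of translates forces $a-b\in\supp(f)-\supp(f)\subseteq\Ll_f$ and hence $a=b$, and then invokes the preceding theorem. Your extra remark that $\Ll_f\neq\emptyset$ (read as $\supp(f)\neq\emptyset$) guarantees $f\neq e$ is a harmless clarification the paper leaves implicit.
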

\begin{proof}
Let  $\tilde s: X_{max}/\Ll_f \to X_{max}$ be a right inverse of the quotient map, i.e.\ $s(\tilde a)$ is a choice of representative for $\tilde a$. Set $A=s(X_{max}/\Ll_f)$ and let  $a, b\in A$. 
If $(\supp(f)+a)\cap (\supp(f)+b) \neq\emptyset$ then $a-b\in \supp(f)-\supp(f)$ hence $a -b \in \Ll_f$. By definition of $A$ this implies $a=b$ and so $A$ satisfies the condition of the last theorem.
\end{proof}
Finally we formulate a criterion which is perhaps easiest checked. Note that, if $\Ll^{sing}$ is open then $\Xmax/\Ll^{sing}$ is finite, by compactness of $\Xmax$. 
\begin{cor}
Let $(X,T)$ be a minimal point distal system with abelian $T$. Suppose that the proximal relation is not transitive. If 
the group $\Ll^{sing}$ generated by differences 
$\xi-\xi'\in \Xmax^{sing}-\Xmax^{sing}$ is not open then the structure group has cardinatlity $2^\mathfrak{c}$.
\end{cor}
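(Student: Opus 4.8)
The plan is to reduce the statement to the preceding corollary. That corollary produces a structure group of cardinality $2^\mathfrak{c}$ as soon as we exhibit a fibre-preserving element $f\in K_e$ with $f\neq e$ (so that $\supp(f)\neq\emptyset$) for which $\Xmax/\Ll_f$ is uncountable. To obtain such an $f$, note that non-transitivity of the proximal relation is, by the last assertion of Lemma~\ref{lem-trans}, equivalent to the little structure group $\Gamma_e$ being non-trivial; so I would choose any $e\neq f\in\Gamma_e$. Since $\Gamma_e\subseteq K_e$ (its elements preserve the $\pmax$-fibres, being contained in the kernel of $\pms$), this $f$ lies in $K_e$, and $f\neq e$ gives $\supp(f)\neq\emptyset$.

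Next I would compare $\Ll_f$ with $\Ll^{sing}$. By the lemma immediately preceding Lemma~\ref{lem-shift}, $\supp(f)\subset\Xmax^{sing}$, hence $\supp(f)-\supp(f)\subset\Xmax^{sing}-\Xmax^{sing}$; taking the subgroups generated by these two symmetric subsets of $\Xmax$ gives $\Ll_f\subset\Ll^{sing}$. Consequently the quotient map $\Xmax\to\Xmax/\Ll^{sing}$ factors through $\Xmax\to\Xmax/\Ll_f$, so $\Xmax/\Ll_f$ surjects onto $\Xmax/\Ll^{sing}$; in particular $\Xmax/\Ll_f$ is uncountable whenever $\Xmax/\Ll^{sing}$ is. Granting that, the preceding corollary applied to this $f$ yields $\card(\Hh_e)=2^\mathfrak{c}$, which is the assertion.

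It remains to turn the hypothesis "$\Ll^{sing}$ is not open" into "$\Xmax/\Ll^{sing}$ is uncountable" — the equivalence recorded in the introduction. I would argue by contraposition: if $\Xmax/\Ll^{sing}$ were countable, then $\Xmax$ is a countable union of cosets of $\Ll^{sing}$, so by the Baire category theorem in the compact metric group $\Xmax$ some coset, and hence $\Ll^{sing}$ itself, is non-meagre. But $\Xmax^{sing}$ is analytic: it is the image under the continuous map $(x,y)\mapsto\pmax(x)$ of the Borel set $\{(x,y):\pmax(x)=\pmax(y),\ x\neq y,\ x\sim_p y\}$, the proximality relation being $G_\delta$ since $\{(x,y):\exists t\in T,\ d(\alpha^t x,\alpha^t y)<1/n\}$ is open for every $n$. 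Hence $\Xmax^{sing}-\Xmax^{sing}$, and the subgroup $\Ll^{sing}$ it generates, are analytic as well, so $\Ll^{sing}$ has the Baire property; Pettis's theorem then forces the non-meagre subgroup $\Ll^{sing}$ to be open, contradicting the hypothesis. This is the step I expect to require the most care: the descriptive-set-theoretic input (analyticity, hence the Baire property, of $\Xmax^{sing}$ and of $\Ll^{sing}$) is exactly what makes the Pettis/Baire dichotomy applicable, and it is the point attributed to Tsankov in the introduction. Everything else is a short diagram chase using Lemma~\ref{lem-trans}, the support lemma, and the preceding corollary; in particular point-distality of $(X,T)$ does not seem to enter this argument and is presumably retained only to stay within the paper's standing setting.
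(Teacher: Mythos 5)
Your proposal is correct and follows essentially the same route as the paper: pick $e\neq f\in\Gamma_e\subset K_e$ via Lemma~\ref{lem-trans}, use the support lemma to get $\emptyset\neq\Ll_f\subset\Ll^{sing}$, convert ``$\Ll^{sing}$ not open'' into ``$\Xmax/\Ll^{sing}$ uncountable'' by the Baire-property/Pettis argument, and invoke the preceding corollary. The only divergence is in justifying that $\Ll^{sing}$ has the Baire property: the paper derives Borelness of $\Xmax^{sing}$ from residuality of the regular points (which is where point-distality enters), whereas you establish analyticity directly from the $G_\delta$-ness of the proximal relation -- a slightly more self-contained argument that indeed renders the point-distality hypothesis superfluous.
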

\begin{proof} 
If the proximal relation is not transitive there is $f\in\Gamma_e$ with $\emptyset\neq \supp f \subset \Xmax^{sing}$ hence $\emptyset\neq \Ll_f\subset \Ll^{sing}$. 
Suppose that $\Xmax/\Ll^{sing}$ is countable.  Then  $\Ll^{sing}$ cannot be meager. 
As the regular points are residual, $\Xmax^{sing}$ and therfore also $\Ll^{sing}$ is a Borel set, hence analytic. It therefore has the Baire property \cite{Kechris}[Thm.29.5]. By Pettis' Theorem \cite{Kechris}[Thm.~9.9] $\Ll^{sing}$ must be open. 
As this contradicts our assumption $\Xmax/\Ll^{sing}$ must be uncountable. Hence also $\Xmax/\Ll^{f}$ is uncountable and the result follows from the last corollary.
\end{proof}
If there are only  countably many singular points in $X_{max}$ then 
$\Ll^{sing}$ is, of course, not open and thus the structure group has cardinatlity $2^\mathfrak{c}$. 
 If there is only one singular orbit then more can be said about $\Hh_e$, see \cite{KY}. 
For instance, if $(X_\theta,{\mathbb Z})$ is the shift dynamical system associated to a bijective substitution $\theta$ of length $\ell$ with trivial generalised height then the maximal equicontinuous factor is the $\ell$-adic odometer $\mathbb Z_\ell$, and the structure group $\Hh$ 
topologically and algebraically isomorphic to 
the group of all functions $g:\mathbb Z_\ell/{\mathbb Z}\to G_\theta$
from the space of orbits of $\mathbb Z_\ell$  
to the group $G_\theta$ which generated by the column maps of the substitution. Here the group multiplication on the group of functions is point-wise and the topology is that of point-wise convergence. 
The fact that any function is allowed leads to the large cardinality.

\end{document}